%
%
%
%
%
%
%
%

\documentclass[11pt]{article}
\usepackage{anysize}\marginsize{3.5cm}{3.5cm}{1.3cm}{2cm}
\makeatletter\@ifundefined{pdfpagewidth}{}{\pdfpagewidth=21.0cm\pdfpageheight=29.7cm}\makeatother 
\usepackage{amssymb,amsmath,graphicx}

\usepackage[latin1]{inputenc}

\emergencystretch=3em
\pagestyle{myheadings}

\makeatletter
\let\orig@item=\@item \def\@item[#1]{\orig@item[\rm #1]}
\renewenvironment{abstract}{\begin{quote}\footnotesize\textbf{\abstractname.}}
{\end{quote}\bigskip}
\newcommand\grant[1]{{\renewcommand\thefootnote{}\footnotetext{#1.}}}
\renewcommand\@seccntformat[1]{\csname the#1\endcsname.\enspace}
\renewcommand\section{\@startsection{section}{1}{\z@}
{-2\baselineskip plus 0.5\baselineskip minus 0.5\baselineskip}
{0.75\baselineskip plus 0.5\baselineskip}{\normalsize\bfseries\centering}}
\renewcommand\subsection{\@startsection{subsection}{2}{\z@}
{-2\baselineskip plus 0.5\baselineskip minus 0.5\baselineskip}
{0.75\baselineskip plus 0.5\baselineskip}{\normalsize\bfseries\centering}}
\renewcommand\paragraph{\@startsection{paragraph}{4}{\z@}{1\baselineskip}
{-0.5em}{\normalsize\bfseries}}
\let\origcaption=\caption \renewcommand\caption[1]{\parbox{0.66\textwidth}
{\origcaption{#1}}}

\renewcommand\@begintheorem[2]{\trivlist\item[\hskip\labelsep
{\bfseries#1 #2.}]\it}
\renewcommand\@opargbegintheorem[3]{\trivlist\item[\hskip\labelsep
{\bfseries#1 #2}] {\bfseries(#3).}\enspace\it\ignorespaces}
\makeatother


\newtheorem{satz}{Satz}[section]
\makeatletter\@addtoreset{equation}{satz}\makeatother

\newtheorem{theorem}[satz]{Theorem}

\newtheorem{cor}[satz]{Corollary}

\newtheorem{lemma}[satz]{Lemma}

\newtheorem{introtheorem}{Theorem}

\newtheorem{introcor}[introtheorem]{Corollary}

\newenvironment{definition}[1][\addtocounter{satz}{1}\bf Definition \thesatz]{\trivlist\item[\hskip\labelsep{\it #1.}]}{\endtrivlist}
\newenvironment{construction}[1][\addtocounter{satz}{1}\bf Construction \thesatz]{\trivlist\item[\hskip\labelsep{\it #1.}]}{\endtrivlist}
\newenvironment{remark}[1][\addtocounter{satz}{1}\bf Remark \thesatz]{\trivlist\item[\hskip\labelsep{\it #1.}]}{\endtrivlist}
\newenvironment{example}[1][\addtocounter{satz}{1}\bf Example \thesatz]{\trivlist\item[\hskip\labelsep{\it #1.}]}{\endtrivlist}

\newenvironment{proof}[1][Proof]{\trivlist\item[\hskip\labelsep{\it #1.}]}
{\hspace*{\fill}$\Box$\endtrivlist}


\newcommand\subjclass[1]{{\renewcommand\thefootnote{}\footnotetext{2000 
\textit{Mathematics Subject Classification:} #1.}}}
\newcommand\keywords[1]{{\renewcommand\thefootnote{}\footnotetext
{\textit{Keywords.} #1.}}}

\newcommand\engqq[1]{``#1''}

\renewcommand\geq{\geqslant}  
\renewcommand\epsilon{\varepsilon}
\renewcommand\phi{\varphi}

\renewcommand\O{{\cal O}}
\renewcommand\P{\mathbb P}

\newcommand\be{\begingroup\arraycolsep=0.13888em\begin{eqnarray*}}
\newcommand\ee{\end{eqnarray*}\endgroup}
\renewcommand\to{\longrightarrow}

\renewcommand\mapsto{\mapstochar\longrightarrow}

\newcommand\set[1]{\left\{#1\right\}}

\newcommand\with{\ \vrule\ }

\newlength\matrcolsep \matrcolsep=\arraycolsep

\newcommand\tline{\noalign{\vskip0.4ex}\hline\noalign{\vskip0.65ex}}

\newcommand\converges[2]{\mathop{\longrightarrow}\limits_{n\to\infty}}

\newcommand\N{\mathbb N}

\newcommand\R{\mathbb R}

\newcommand\Z{\mathbb Z}
\newcommand\C{\mathbb C}
\newcommand\newop[2]{\newcommand#1{\mathop{\rm #2}\nolimits}}
\newop\mult{mult}
\newop\NS{NS}
\newop\Amp{Amp}
\newop\Pic{Pic}
\newop\Bl{Bl} 
\newop\End{End} 
\newop\Nef{Nef}
\newop\Mov{Mov}
\newop\vol{vol}
\newop\codim{codim}
\newop\BigCone{Big} 
\newop\interior{int}

\newop{\rang}{rang}
\newop{\dv}{div}
\newop{\id}{id}
\newop{\ord}{ord}
\newop{\Div}{Div}
\newop{\Proj}{Proj}
\newop{\im}{im}
\newop{\Face}{Face}
\newop{\conv}{conv}
\newop{\exc}{exc}
\newop{\dom}{dom}
\newop{\Supp}{Supp}
\newop{\Vol}{Vol}
\newop{\Neg}{Neg}
\newop{\Cox}{Cox}

\newop{\Null}{Null}
\newop{\NE}{\overline{{NE}}}
\newop{\Eff}{\overline{{Eff}}}


\begin{document}

   \title{On the polyhedrality of global Okounkov bodies}
   \author{David Schmitz and Henrik Sepp\"anen }
   \date{
   }
   \maketitle
   \thispagestyle{empty}
   \subjclass{Primary 14C20; Secondary 14J25
   }
   \keywords{Okounkov body, Minkowski Base, Chamber Decomposition}

\grant{ The first author was supported by DFG grant BA 1559/6-1. The second author was supported by the DFG Priority Programme 1388 ``Representation 
Theory"}



\begin{abstract}
   We prove that the existence of a finite Minkowski base for
   Okounkov bodies on a smooth projective variety with respect to an admissible flag
   implies rational polyhedrality of the global Okounkov body. As an application of this general result,
   we deduce that the global Okounkov body of a surface with finitely generated pseudo-effective cone with 
   respect to a general flag is rational polyhedral. We give an alternative proof for this fact 
   which  recovers the generators more explicitly. We also prove the rational
   polyhedrality of global Okounkov bodies in the case of certain homogeneous 3-folds 
   using inductive methods.
\end{abstract} 


\section*{Introduction}
During the last couple of years, the construction of Okounkov bodies of pseudo-effective divisors on a variety $X$ has gained quite a lot of attention. Following an idea of Okounkov (\cite{ok}), it has been formally introduced independently by Kaveh and Khovanskii (\cite{kk}), and Lazarsfeld and Musta\c t\u a (\cite{lm}). For details on the construction we refer to these two seminal papers. Noticed from the beginning, the most prominent feature of the Okounkov body $\Delta_{Y_\bullet}(D)$ of a given pseudo-effective divisor $D$ with respect to some admissible flag $Y_\bullet$ is the fact that its volume is independent of the chosen flag and recovers the volume of the divisor $D$. 

The construction for one divisor $D$ can be extended to the global situation of all divisors on a given variety as was shown in \cite{lm}. More concretely, there exists a closed convex cone $\Delta_{Y_\bullet}(X)$ in the direct product $\R^n\times N^1(X)_\R$ such the fiber with respect to the second projection over each big divisor class $[D]$ is the body $\Delta_{Y_\bullet}(D)$, i.e.,
$$
	pr_2^{-1}(D)\cap\Delta_{Y_\bullet}(X)=\Delta_{Y_\bullet}(D) \times \{[D]\}.
$$

Determining Okounkov bodies in general as well as describing their geometric properties is notoriously hard, and the situation for global Okounkov bodies is even worse. It was proven by Anderson-K\"uronya-Lozovanu (\cite{akl}) on one hand, and by the second author on the other hand (cf. \cite{sep1}) in 2012 that ample line bundles always admit an admissible flag for which the Okounkov body is polyhedral. In fact, the first mentioned authors also prove this for 
semi-ample line bundles. For the global body, however, less in this regard is known.
 For Mori dream spaces, Okawa in \cite{oka} gives conditions on a flag which would imply 
that the corresponding global Okounkov body is rational polyhedral. Toric varieties have
polyhedral global Okounkov bodies with respect to a torus invariant flag \cite[Proposition 6.1]{lm}. This also holds for projectivizations of rank two toric vector bundles (\cite{gon}), and as was shown in \cite{pet}, for rational complexity-one $T$-varieties. In \cite{sep}, the 
second author of this paper shows that a homogeneous surface with a rational polyhedral pseudo-effective cone admits a rational polyhedral global Okounkov body. To our knowledge no other cases are known so far.

One approach to the determination of Okounkov body of big divisors on a given variety $X$  has been presented in \cite{dp} and \cite{psu}. The basic idea is to find elementary \engqq{building-blocks} from which all Okounkov bodies of pseudo-effective divisors on $X$ can be constructed as Minkowski sums. For the cases of general flags on smooth surfaces and torus-invariant flags on toric varieties it was shown respectively in the above papers that there exists such a \engqq{Minkowski base}. We recall the precise definition in section \ref{S: Mink}. 

In the present note we investigate the consequences of the existence of a Minkowski base for the shape of the global Okounkov body. Concretely, we prove the following.
\begin{introtheorem}\label{introth}
	Let $X$ be a smooth projective variety and let $Y_{\bullet}$ 
	be an admissible flag such that $X$ admits a Minkowski base $D_1,\ldots,D_r$ with
	respect to $Y_\bullet$ whose corresponding Okounkov bodies $\Delta_{Y_\bullet}(D_i)$ 
	are rational polyhedral. Then the global Okounkov body $\Delta_{Y_\bullet}(X)$ 
	is rational polyhedral.
	
	More concretely, it is spanned by the set of vectors
	$$
	\bigcup_i\set{(x,[D_i])\with x \mbox{ vertex of } \Delta_{Y_\bullet}(D_i)}.
	$$
\end{introtheorem}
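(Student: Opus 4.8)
The plan is to show that the claimed cone $C := \operatorname{conv}\bigl(\bigcup_i \{(x,[D_i]) : x \text{ vertex of } \Delta_{Y_\bullet}(D_i)\}\bigr)$ (I mean the closed convex cone it generates, with apex at the origin) coincides with $\Delta_{Y_\bullet}(X)$. The inclusion $C \subseteq \Delta_{Y_\bullet}(X)$ should be the easy direction: each generator $(x,[D_i])$ lies in $\Delta_{Y_\bullet}(D_i)\times\{[D_i]\} = pr_2^{-1}(D_i)\cap\Delta_{Y_\bullet}(X)$, and since $\Delta_{Y_\bullet}(X)$ is a closed convex cone it contains the cone on these points. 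For the reverse inclusion I would argue fiberwise. By the defining property of the global body, it suffices to check that for every big class $[D]$ one has $\Delta_{Y_\bullet}(D)\times\{[D]\} \subseteq C$; the pseudo-effective boundary classes then follow by passing to closures (the global body is the closure of the union of the big fibers). So fix a big divisor $D$. Because $D_1,\dots,D_r$ is a Minkowski base with respect to $Y_\bullet$, by definition there exist nonnegative coefficients $a_i = a_i(D)$ with $[D] = \sum_i a_i[D_i]$ and a Minkowski decomposition
\[
\Delta_{Y_\bullet}(D) = \sum_i a_i\,\Delta_{Y_\bullet}(D_i).
\]

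Now take any point $x \in \Delta_{Y_\bullet}(D)$. Using the Minkowski decomposition, write $x = \sum_i a_i x_i$ with $x_i \in \Delta_{Y_\bullet}(D_i)$. Since each $\Delta_{Y_\bullet}(D_i)$ is rational polyhedral, in particular compact, it is the convex hull of its (finitely many) vertices, so $x_i = \sum_j \lambda_{ij} v_{ij}$ with $v_{ij}$ the vertices of $\Delta_{Y_\bullet}(D_i)$, $\lambda_{ij}\ge 0$, $\sum_j \lambda_{ij} = 1$. Then
\[
(x,[D]) = \Bigl(\sum_i a_i x_i,\ \sum_i a_i[D_i]\Bigr) = \sum_{i,j} a_i\lambda_{ij}\,(v_{ij},[D_i]),
\]
which exhibits $(x,[D])$ as a nonnegative combination of the generating vectors of $C$. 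Hence $\Delta_{Y_\bullet}(D)\times\{[D]\}\subseteq C$, and letting $D$ range over all big classes and taking closure gives $\Delta_{Y_\bullet}(X)\subseteq C$. Combined with the first inclusion, $\Delta_{Y_\bullet}(X) = C$, which is rational polyhedral since it is the cone generated by finitely many rational vectors (each vertex of a rational polytope is rational).

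The one point that needs care — and which I expect to be the main obstacle — is the passage from the big cone to its boundary, i.e.\ verifying that $\Delta_{Y_\bullet}(X)$ is genuinely the closure of $\bigcup_{[D]\ \mathrm{big}} \Delta_{Y_\bullet}(D)\times\{[D]\}$ and that no extra points appear on the boundary of the pseudo-effective cone that are not already captured by $C$. Since $C$ is closed and contains all the big fibers, this is automatic once we know the big fibers are dense in $\Delta_{Y_\bullet}(X)$, which is part of the construction of the global body in \cite{lm}; but one should state this explicitly. A secondary, more bookkeeping-type issue is to confirm that the Minkowski-base coefficients $a_i(D)$ can be taken on the nose (not merely after scaling $D$), which is exactly what the definition of a Minkowski base in section \ref{S: Mink} provides, so I would simply cite that definition. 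No deeper input — no positivity estimates, no vanishing theorems — seems to be required; the result is essentially a formal consequence of the Minkowski decomposition together with compactness of the building-block bodies.
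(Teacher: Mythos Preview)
Your argument is correct and in fact slightly more direct than the paper's. The core idea is identical: for each big $D$ use the Minkowski decomposition $\Delta_{Y_\bullet}(D)=\sum a_i\Delta_{Y_\bullet}(D_i)$ to write every fiber point as a nonnegative combination of the vertex vectors $(v_{ij},[D_i])$. The paper, however, first introduces a \emph{Minkowski chamber decomposition} of $\Eff(X)$ into simplicial subcones, shows that $pr_2^{-1}(\mathcal C)\cap\Delta_{Y_\bullet}(X)$ is rational polyhedral over each chamber $\mathcal C$ (by exactly your computation, with the $a_i$ now uniquely determined by the chamber), and then invokes a combinatorial gluing lemma of Okawa \cite[Lemma A.1]{oka} to conclude that the union over all chambers is again rational polyhedral. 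Your approach bypasses both the chamber structure and the Okawa lemma by proving the two inclusions $C\subseteq\Delta_{Y_\bullet}(X)$ and $\Delta_{Y_\bullet}(X)\subseteq C$ directly; the boundary issue you flag is handled either by noting that the Minkowski-base definition applies to all pseudo-effective classes, or (as you indicate) by the density of big fibers, which follows from convexity of the global body together with the existence of an ample class. The paper's route has the bookkeeping advantage that on each chamber the coefficients $a_i$ are unique and linear, but for the bare statement your argument is the more economical one.
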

We thus obtain concrete vectors generating the global Okounkov body in terms of the Minkowski base and the vertices of the corresponding indecomposable bodies. It turns out that in general the set of generators need not be minimal. By the above mentioned result from \cite{dp}, the theorem yields the
following.
\begin{introcor}
	Let $X$ be a smooth projective surface with rational polyhedral effective cone. 
	Then the global Okounkov body $\Delta_{Y_\bullet}(X)$ with respect to a general flag is rational
	polyhedral. 
\end{introcor}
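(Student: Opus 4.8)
The plan is to read the corollary off Theorem~\ref{introth}. By the result of \cite{dp}, a smooth projective surface $X$ carries a finite Minkowski base $D_1,\dots,D_r$ with respect to a general admissible flag $Y_\bullet=(C,\{x\})$. In view of Theorem~\ref{introth}, it then suffices to show that, when $\Eff(X)$ is rational polyhedral, each of the finitely many Okounkov bodies $\Delta_{Y_\bullet}(D_i)$ is a rational polygon; the rational polyhedrality of $\Delta_{Y_\bullet}(X)$, together with the explicit list of generators, follows at once.

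For the remaining point I would invoke the Lazarsfeld--Musta\c t\u a description \cite{lm} of Okounkov bodies on surfaces: for a big divisor $D$ and the flag $(C,\{x\})$ one has
$$
\Delta_{Y_\bullet}(D)=\set{(t,y)\in\R^2\with 0\le t\le\mu,\ \alpha(t)\le y\le\beta(t)},
$$
where $\mu=\sup\set{s>0\with D-sC\in\BigCone(X)}$, and, writing $D-tC=P_t+N_t$ for the Zariski decomposition, $\alpha(t)=\ord_x\big(N_t\restr{C}\big)$ and $\beta(t)=\alpha(t)+P_t\cdot C$ (with the usual modification when $C$ is a component of $N_t$). Since $\Eff(X)$ is rational polyhedral, the big cone $\BigCone(X)$ decomposes into finitely many rational polyhedral Zariski chambers, on each of which the support of the negative part is constant and $P,N$ depend $\Q$-linearly on the class. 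Hence, as $t$ ranges over $[0,\mu]$, the segment $D-tC$ meets only finitely many chambers and both $\alpha$ and $\beta$ are piecewise affine with rational slopes and breakpoints, while $\mu\in\Q$. Therefore $\Delta_{Y_\bullet}(D)$ is a rational polygon for \emph{every} big $D$, in particular for the $D_i$; feeding this into Theorem~\ref{introth} finishes the proof.

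I do not expect a genuine obstacle here. The one thing to be careful about is that $Y_\bullet$ must simultaneously be admissible, yield a Minkowski base in the sense of \cite{dp}, and have $x$ in sufficiently general position on $C$; but each of these is satisfied on a dense open set of flags, and in fact on a surface with rational polyhedral $\Eff(X)$ the body $\Delta_{Y_\bullet}(D)$ is rational polyhedral for \emph{any} admissible flag, so a general flag causes no conflict. The substance of the argument is thus entirely contained in Theorem~\ref{introth} and in the Minkowski base theorem of \cite{dp}; the corollary merely combines the two.
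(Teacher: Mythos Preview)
Your approach is essentially the paper's: combine Theorem~\ref{introth} with the Minkowski base result of \cite{dp}. The paper's proof is a one-liner citing exactly these two inputs. There is one point the paper makes explicit that you gloss over: the Minkowski base constructed in \cite{dp} consists of \emph{movable} divisors and represents non-movable Okounkov bodies as \emph{translates} of movable ones, whereas the definition of Minkowski base used here (Definition~1.1) requires decomposing every pseudo-effective class as a non-negative combination of base elements. The paper bridges this by adjoining the finitely many prime divisors occurring as components of negative parts of Zariski decompositions; their Okounkov bodies are single (rational) points, so the translation in \cite{dp} becomes a Minkowski sum with these points. Your argument implicitly needs this step as well.

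Conversely, your detour through the Lazarsfeld--Musta\c t\u a slice description to prove rational polyhedrality of each $\Delta_{Y_\bullet}(D_i)$ is correct but more than is required: once the base is augmented as above, the movable base elements already have polygonal bodies by \cite{dp}, and the added negative-part elements have zero-dimensional bodies. So your extra paragraph is sound but redundant; the substance of the corollary really is just Theorem~\ref{introth} plus \cite{dp} plus the augmentation remark.
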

As in the theorem, the generating set consists of vectors given by corners of Okounkov bodies of all Minkowski base elements. This result can be improved by methods not depending on the Minkowski base construction in the sense that we  obtain explicitly a set of vectors generating the global Okounkov body. More concretely, in section \ref{s:surface} we prove the following.
\begin{introtheorem}
	The cone $\Delta_{Y_\bullet}(X)$ is the closed convex cone generated by the vectors 
	$$
	((0,0), [D_i]), ((0,P_i \cdot A),[ D_i]), ((1,0), [A]), \quad i=1,\ldots, r,
	$$
	where the $D_i$ are the generators of extremal rays of BKS-chambers, $P_i$ are their positive part, 
	and $A$ is the numerical class of the curve $Y_1$.
\end{introtheorem}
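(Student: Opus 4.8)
The plan is to prove the two inclusions between $\Delta_{Y_\bullet}(X)$ and the closed convex cone $\sigma$ spanned by the listed vectors, using the explicit description of Okounkov bodies of big divisors on a surface with respect to a general flag together with the BKS-chamber decomposition. Write $C:=Y_1$, so that $A=[C]$. Since $\Eff(X)$ is rational polyhedral, only finitely many curves $\Gamma_1,\dots,\Gamma_M$ occur in negative parts of Zariski decompositions of big divisors on $X$ (each has negative self-intersection, hence spans an extremal ray of $\Eff(X)$), and for a general flag $C$ is none of them while $Y_2$ avoids the finite set $\bigcup_m(\Gamma_m\cap C)$. By the surface case of \cite{lm} (see also \cite{dp}) this gives, for every big class $[D]$,
$$
\Delta_{Y_\bullet}(D)=\set{(t,y)\with 0\le t\le\mu_C(D),\ 0\le y\le P(D-tC)\cdot A},
$$
where $\mu_C(D)=\sup\set{t\ge 0\with D-tC\in\Eff(X)}$ and $P(\,\cdot\,)$ is the positive part of the Zariski decomposition, which is continuous on $\Eff(X)$ and linear on the closure $\overline{\Sigma_j}$ of each BKS-chamber $\Sigma_j$; the $\overline{\Sigma_j}$ are rational polyhedral and cover $\Eff(X)$, the $D_i$ being by definition the primitive generators of their extremal rays and $P_i=P(D_i)$.

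For the inclusion $\sigma\subseteq\Delta_{Y_\bullet}(X)$, since the global body is a closed convex cone it suffices to show that each listed generator lies in it. Taking $D=C$ one finds $\mu_C(C)=1$ and $P(C-tC)\cdot A=(1-t)A^2$, so $\Delta_{Y_\bullet}(C)$ is the triangle with vertices $(0,0)$, $(0,A^2)$, $(1,0)$, which puts $((1,0),[A])$ in $\Delta_{Y_\bullet}(X)$. If $D_i$ is big, then $(0,0)$ and $(0,P_i\cdot A)$ are vertices of $\Delta_{Y_\bullet}(D_i)$, so $((0,0),[D_i])$ and $((0,P_i\cdot A),[D_i])$ lie in $\Delta_{Y_\bullet}(X)$; if $D_i\in\partial\Eff(X)$ one approximates it by the big classes $D_i+\tfrac1nC$ and passes to the limit, using continuity of $P$ and closedness of $\Delta_{Y_\bullet}(X)$.

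For the reverse inclusion, as $\sigma$ is closed it is enough to put $((t,y),[D])$ in $\sigma$ for every big $D$ and every $(t,y)\in\Delta_{Y_\bullet}(D)$. Setting $E:=D-tC$ (pseudoeffective, since $t\le\mu_C(D)$), the translation identity
$$
((t,y),[D])=((0,y),[E])+t\cdot((1,0),[A])
$$
reduces the task to showing $((0,y),[E])\in\sigma$ for $0\le y\le P(E)\cdot A$. As such a point is the convex combination $\tfrac{y}{P(E)\cdot A}((0,P(E)\cdot A),[E])+\bigl(1-\tfrac{y}{P(E)\cdot A}\bigr)((0,0),[E])$ (and equals $((0,0),[E])$ when $P(E)\cdot A=0$), it remains to verify $((0,0),[E])\in\sigma$ and $((0,P(E)\cdot A),[E])\in\sigma$ for all $E\in\Eff(X)$. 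Choosing a chamber with $E\in\overline{\Sigma_j}$ and writing $E=\sum_k c_kD_{i_k}$ as a nonnegative combination of the ray generators of $\overline{\Sigma_j}$, linearity of $P$ on $\overline{\Sigma_j}$ gives $P(E)\cdot A=\sum_k c_k\,(P_{i_k}\cdot A)$, whence $((0,P(E)\cdot A),[E])=\sum_k c_k((0,P_{i_k}\cdot A),[D_{i_k}])\in\sigma$ and similarly $((0,0),[E])=\sum_k c_k((0,0),[D_{i_k}])\in\sigma$. Combining the two inclusions proves the theorem.

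The routine ingredients are the continuity and piecewise-linearity of the Zariski decomposition on the chambers and the shape of $\Delta_{Y_\bullet}(D)$ recalled above. The hardest part will be the input that makes the generality of the flag count, namely that the lower boundary of $\Delta_{Y_\bullet}(D)$ vanishes identically and that the $t$-range starts at $0$ --- equivalently, that $C$ avoids the negative curves in the relevant sense and $Y_2$ is a general point of $C$ --- which is exactly where finite generation of $\Eff(X)$ enters. Once this is in hand, the rest is bookkeeping: the translation identity collapses the $t$-direction onto the fibre over $t=0$, and linearity of the positive part on each chamber collapses that fibre onto the extremal rays of the BKS-decomposition.
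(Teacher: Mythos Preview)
Your argument is correct and follows essentially the same route as the paper: reduce the first coordinate to zero via the translation identity against $((1,0),[A])$, bound the second coordinate by $P(E)\cdot A$, and then decompose $E$ in a BKS-chamber using linearity of the positive part. The only difference is packaging: you invoke the explicit surface formula for $\Delta_{Y_\bullet}(D)$ (which already encodes the vanishing of the lower boundary for a general flag), whereas the paper carries out the underlying section arithmetic directly, dividing $s$ by $s_A^a$ and factoring $\zeta^m=\eta\sigma$ to obtain the same bound.
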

The rational polyhedrality of the global Okounkov body in this setting does not come as a surprise. In fact, it appears to be considered folklore knowledge since the nice description of Okounkov bodies of divisors on surfaces in \cite{KLM}. However, we are not aware of a proof, let alone an explicit description of the generators, having been given so far.

In the final section of this note, we use the above result on surfaces to obtain a similar one for  homogeneous 3-folds.

Throughout this paper we work over the complex numbers.
 
\bigskip
{\small\noindent {\bf Acknowledgements.} } The authors would like to thank Thomas Bauer and Alex K\"uronya  for helpful discussions and suggestions. Additionally, we express our gratitude towards the organizers of the workshops \engqq{Convex bodies and Representation Theory}, Megumi Harada, Kiumars Kaveh, and Askold Khovanskii; and \engqq{Positivity of linear series and vector bundles}, S\'andor Kov\'acs, Alex Küronya, and Tomasz Szemberg, at BIRS, during which this collaboration was initiated.

%
\section{Minkowski chambers}\label{S: Mink}
In this section we recall the definition of a Minkowski base and introduce a 
crucial construction for the proof of Theorem \ref{introth}, the Minkowski chamber 
decomposition. 

Let $X$ be a smooth projective variety over the complex numbers and let 
$Y_{\bullet} : X = Y_{0} \supseteq Y_{1} \supseteq \dots \supseteq Y_{n-1} \supseteq Y_{n} = \{pt\}$
be an admissible flag such that $X$ admits a Minkowski base $D_1,\ldots,D_r$ consisting of pseudo-effective divisors, with respect to $Y_\bullet$
in the following sense.
\begin{definition}
	A finite collection $\set{D_1,\ldots,D_R}$ of pseudo-effective divisors on a smooth projective variety $X$ is a \emph{Minkowski base} if 
	\begin{itemize}
		\item
			For any pseudo-effective divisor $D$ on $X$ there exist non-negative 
			numbers $a_1\ldots,a_r$ such that
			$$
				D=\sum a_iD_i,\qquad\mbox{and }\qquad \Delta_{Y_\bullet}(D)=\sum
				 a_i\Delta_{Y_\bullet}(D_i), and
			$$
		\item	
			the $\Delta_{Y_\bullet}(D_i)$ are indecomposable in the sense 
			of Minkowski sums.
	\end{itemize}
\end{definition}
 
Note that we allow here as Minkowski summands $\Delta_{Y_\bullet}(D_i)$ which are just affine points, i.e., which correspond to fixed divisors $D_i$, as opposed to the approach in \cite{psu} where Okounkov bodies of non-movable divisors where represented as translates of movable ones. In fact, in section \ref{s:globalOKB} we interpret the translations there as a Minkowski sum with a linear combination of the affine points $\Delta_{Y_\bullet}(D_i)$ representing the negative part of a divisor. 

We will use the following.
\begin{construction}\emph{(Minkowski chambers).}
The Minkowski base induces a chamber decomposition of the pseudo-effective cone into simplicial cones, each spanned by exactly $\rho$ of the Minkowski base elements and such that no Minkowski base element apart from the spanning ones is contained in any chamber. This decomposition is obtained as follows. If there is a Minkowski base element $\gamma$ not contained in one of the rays spanning $\Eff(X)$ then decompose $\Eff(X)$ into subcones spanned by the sides of $\Eff(X)$ and the ray spanned by $\rho$. Repeat the process for each subcone until no Minkowski base elements apart from spanning ones lie in each cone. Now, we can pass to a triangulation of each subcone into simplicial cones without having to add any new rays. We call the resulting subcones \emph{Minkowski chambers} of $\Eff(X)$.
\end{construction}
Note that the chambers are rational cones (with generators the corresponding Minkowski base elements) and that the coefficients of the Minkowski decomposition are unique and vary linearly on the closure of each chamber, provided we allow only decompositions with respect to the base elements spanning the chamber. The chamber decomposition itself need not be unique but any triangular decomposition in the above sense will do for our purpose.

\begin{example}\label{ex: Bl2}

Let $\pi:X\to\P^2$ be the blowup in two points $p_1,p_2$ with exceptional divisors $E_1,E_2$. Denote by $H$ the pullback $\pi^\ast(\mathcal O_{\P^2}(1))$. The pseudo-effective cone $\Eff(X)$ is spanned by the classes $E_1,E_2$ and $H-E_1-E_2$. Taking as a flag a general member $C$ of the ample class $3H-E_1-E_2$ and a general point on it, following the algorithm in \cite{dp}  we obtain the Minkowski base consisting of the elements $3H-E_1-E_2, H, 3H-E_1, 3H-E_2, 2H-E_1-E_2,  H-E_1, H-E_2$. The following figure illustrates the corresponding chamber decomposition.

	\begin{figure}[ht]
\centering
\unitlength 1mm 
\linethickness{0.4pt}
\ifx\plotpoint\undefined\newsavebox{\plotpoint}\fi 
\begin{picture}(75,50)(0,10)
\includegraphics{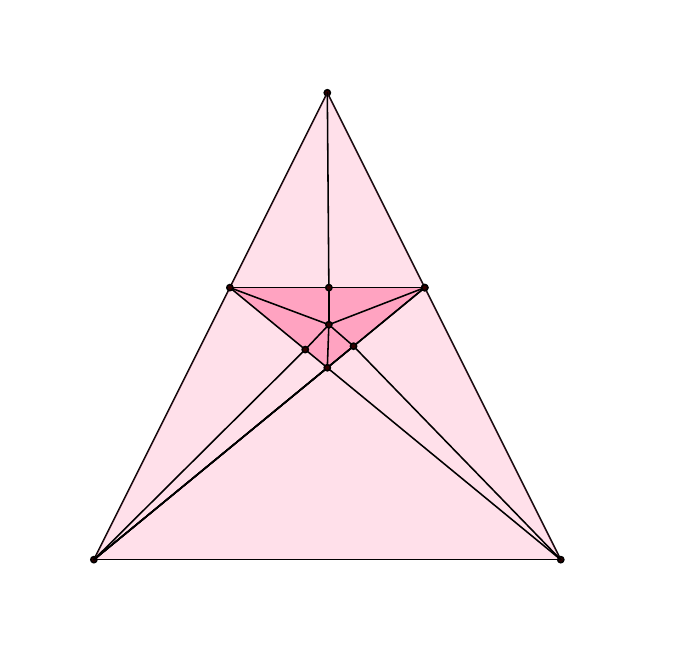}
\end{picture}
\caption{Minkowski chamber decomposition of $\Eff(X)$\label{fig}}
\end{figure}

\end{example}

%
\section{The global Okounkov body}\label{s:globalOKB}

Now, consider the global Okounkov body $\Delta_{Y_\bullet}(X)\subseteq \R^n\times N^1(X)_\R$  of $X$ with respect to the flag $Y_\bullet$, which for each big divisor $D$ satisfies the condition
$$
	pr_2^{-1}(D)\cap\Delta_{Y_\bullet}(X)=\Delta_{Y_\bullet}(D)\times \set{[D]}.
$$
We can now prove the following. 

\begin{theorem}\label{th}
	Let $X$ be a smooth projective variety and let 
	$Y_{\bullet} : X = Y_{0} \supseteq Y_{1} \supseteq \ldots
	\supseteq Y_{n-1} \supseteq Y_{n} = \{pt\}$ 
	be an admissible flag such that $X$ admits a Minkowski base $D_1,\ldots,D_r$ with
	respect to $Y_\bullet$ whose
	corresponding Okounkov bodies $\Delta_{Y_\bullet}(D_i)$ are rational 
	polyhedral. Then the global Okounkov body $\Delta_{Y_\bullet}(X)$ is rational polyhedral.
	
	More concretely, it is spanned by the set of vectors
	$$
	\bigcup_i\set{(x,[D_i])\with x \mbox{ vertex of } \Delta_{Y_\bullet}(D_i)}.
	$$
\end{theorem}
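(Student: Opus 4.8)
The plan is to show two inclusions between the cone $C$ spanned by the vectors $\bigcup_i\{(x,[D_i]) : x \text{ vertex of } \Delta_{Y_\bullet}(D_i)\}$ and the global Okounkov body $\Delta_{Y_\bullet}(X)$. The inclusion $C \subseteq \Delta_{Y_\bullet}(X)$ should follow quickly: for a fixed $i$, the fiber of $\Delta_{Y_\bullet}(X)$ over the ray $\R_{\ge 0}[D_i]$ is the cone over $\Delta_{Y_\bullet}(D_i) \times \{[D_i]\}$ — this uses homogeneity of Okounkov bodies under scaling, i.e. $\Delta_{Y_\bullet}(tD) = t\,\Delta_{Y_\bullet}(D)$ for $t > 0$, together with the fact that $\Delta_{Y_\bullet}(X)$ is a closed convex cone. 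Hence each generator $(x,[D_i])$ with $x$ a vertex of $\Delta_{Y_\bullet}(D_i)$ lies in $\Delta_{Y_\bullet}(X)$, and since the latter is convex, so does $C$.

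For the reverse inclusion $\Delta_{Y_\bullet}(X) \subseteq C$, I would work chamber by chamber using the Minkowski chamber decomposition from Section~\ref{S: Mink}. It suffices to check that every point of $\Delta_{Y_\bullet}(X)$ lying over a big class $D$ in a fixed Minkowski chamber $\sigma$ is in $C$; the big classes are dense in $\Eff(X)$ and $\Delta_{Y_\bullet}(X)$ is closed, and the finitely many chambers cover $\Eff(X)$. Fix such a $D$ with its (unique, on $\sigma$) Minkowski decomposition $D = \sum_{j} a_j D_{i_j}$ where $D_{i_1},\dots,D_{i_\rho}$ are the base elements spanning $\sigma$ and $a_j \ge 0$. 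By the defining property of a Minkowski base, $\Delta_{Y_\bullet}(D) = \sum_j a_j \Delta_{Y_\bullet}(D_{i_j})$. A point in the fiber over $D$ has the form $(x, [D])$ with $x \in \Delta_{Y_\bullet}(D)$, so $x = \sum_j a_j x_j$ with $x_j \in \Delta_{Y_\bullet}(D_{i_j})$; writing each $x_j$ as a convex combination of the vertices of the rational polytope $\Delta_{Y_\bullet}(D_{i_j})$, one gets $(x,[D]) = \sum_j a_j \sum_k \lambda_{jk}(v_{jk}, [D_{i_j}])$ — a non-negative combination of the listed generators — using that $[D] = \sum_j a_j [D_{i_j}]$ and $\sum_k \lambda_{jk} = 1$. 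This exhibits $(x,[D]) \in C$.

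The remaining point is that $C$, being the cone spanned by finitely many rational vectors, is rational polyhedral, and that it is closed; combined with the two inclusions and density of big classes this gives $\Delta_{Y_\bullet}(X) = C$ and hence the rational polyhedrality. The main obstacle I anticipate is not the convex-geometry bookkeeping but making sure the reduction to big classes in the interiors of chambers is legitimate — i.e. that $\Delta_{Y_\bullet}(X)$ is genuinely the closure of the union of the fibers over big classes and that the Minkowski-sum identity $\Delta_{Y_\bullet}(D) = \sum a_i \Delta_{Y_\bullet}(D_i)$ really does hold with coefficients varying linearly (hence continuously) on the closed chamber, so that no boundary phenomena are lost in the limit. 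One also has to be slightly careful that the decomposition used is with respect to exactly the $\rho$ base elements spanning the chamber containing $[D]$, which is where the construction's uniqueness-of-coefficients remark is used.
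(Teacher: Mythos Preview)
Your proof is correct and follows essentially the same route as the paper: both reduce to the Minkowski chamber decomposition and the identity $\Delta_{Y_\bullet}(D)=\sum a_i\Delta_{Y_\bullet}(D_i)$ on each chamber to exhibit every fiber over a big class as a nonnegative combination of the listed generators. The only packaging difference is that the paper first shows $pr_2^{-1}(\mathcal C)\cap\Delta_{Y_\bullet}(X)$ is rational polyhedral for each chamber $\mathcal C$ and then invokes a gluing lemma of Okawa \cite[Lemma~A.1]{oka} to conclude globally, whereas your direct two-inclusion argument with the pre-specified cone $C$ (plus density of big classes and closedness of $C$) sidesteps that citation.
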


\begin{proof}
Pick a Minkowski chamber decomposition as described in Construction 1.2. 
We prove that
$$
		pr_2^{-1}(\mathcal C)\cap\Delta_{Y_\bullet}(X)
$$
is rational polyhedral for each Minkowski chamber $\mathcal C$ in the decomposition, from which
 it follows by a general combinatorial result (\cite[Lemma A.1]{oka})
that the cone $pr_2^{-1}(\Eff(X))\cap\Delta_{Y_\bullet}(X)$ is already rational polyhedral. 
This implies that in fact the whole global body $\Delta_{Y_\bullet}(X)$ is rational polyhedral.

Let $\mathcal C$ be a chamber in the above decomposition spanned by Minkowski base elements $D_1,\ldots,D_\rho$. Then for every $D=\sum_{i=1}^\rho a_iD_i$ in $\mathcal C$ we have
$$
	pr_2^{-1}(D)\cap\Delta_{Y_\bullet}(X)=\Delta_{Y_\bullet}(D)=\sum a_i\Delta_{Y_\bullet}(D_i),
$$
in other words, $pr_2^{-1}(\mathcal C)\cap\Delta_{Y_\bullet}(X)$ is spanned by 
the set of vectors
$$
	\bigcup_i\set{(x,[D_i])\with x \mbox{ vertex of } \Delta_{Y_\bullet}(D_i)},
$$
hence it is in fact rational polyhedral as soon as the bodies $\Delta_{Y_\bullet}(D_i)$ are.
Since this is the case for all Minkowski base elements $D_i$ by assumption, the cone $pr_2^{-1}(\Eff(X))\cap\Delta_{Y_\bullet}(X)$ is rational polyhedral as well. 
\end{proof}


In order to apply the above theorem to the cases studied in \cite{psu} and \cite{dp} first we make sure that the results obtained there for toric varieties and surfaces, respectively, yield Minkowski bases also in our sense. 

All we need to do is to augment the Minkowski base consisting of movable divisors constructed in the papers by the supports of negative parts in the Zariski decomposition of big divisors. These correspond in the considered cases to finitely many effective divisors. Decomposing the Zariski decomposition $D= P+N$ of a pseudo-effective divisor $D$ as  
$$
	D= P+N =\sum a_i P_i + \sum b_j N_j,
$$
with respect to Minkowski base elements, yields the decomposition of the Okounkov body
\be
	\Delta_{Y_\bullet}(D)&=& \sum a_i\Delta_{Y_\bullet}(P_i) + \sum b_j \Delta_{Y_\bullet}(N_j)\\
		       &=& \sum a_i\Delta_{Y_\bullet}(P_i) + \sum b_j \nu_\bullet(s_j)\\
		       &=& \sum a_i\Delta_{Y_\bullet}(P_i) + \nu_\bullet(s_1^{b_1}\cdot\ldots\cdot s_m^{b_m}),\\
\ee
where by abuse of notation $\nu_\bullet(s_j)$ stands for the normalized valuation vector of a section $s_j\in H^0(X,\O_X(mN_j))$ for large enough $m$. Now, the last summand exactly gives the translation $\phi$ from \cite[Definition 1.3]{psu}.

We thus get the following.

\begin{cor}\label{C: surface}
	Let $X$ be a smooth projective surface with rational polyhedral effective cone. 
	Then the global Okounkov body $\Delta_{Y_\bullet}(X)$ with respect to a general flag is rational
	polyhedral.
\end{cor}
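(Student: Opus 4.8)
The plan is to deduce the statement from Theorem \ref{th}, so the whole task reduces to producing, for a general flag $Y_\bullet=(C,x)$, a finite Minkowski base of $X$ in the sense of the Definition of Section \ref{S: Mink} all of whose associated Okounkov bodies are rational polyhedral. The candidate is precisely the one sketched in the discussion preceding the statement: start from the finite Minkowski base $P_1,\dots,P_s$ of \emph{movable} divisors produced for a general flag in \cite{dp}, and augment it by the irreducible curves $N_1,\dots,N_m$ of negative self-intersection. There are only finitely many of the latter, since each spans an extremal ray of $\Eff(X)$ and $\Eff(X)$ is rational polyhedral by hypothesis; this is also where the hypothesis enters in an essential way, because finiteness of the set of negative curves is what forces the Zariski chamber decomposition — and hence the collection produced in \cite{dp} — to be finite.

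Next I would verify that $D_1,\dots,D_{s+m}:=P_1,\dots,P_s,N_1,\dots,N_m$ is a Minkowski base. For a big divisor $D$ with Zariski decomposition $D=P+N$, the intersection matrix of the support of $N$ is negative definite, so every component of $N$ has negative self-intersection, i.e.\ $N=\sum b_j N_j$ with $b_j\ge 0$; writing $P=\sum a_i P_i$ as in \cite{dp} one gets $D=\sum a_i P_i+\sum b_j N_j$, and the computation displayed before the statement gives $\Delta_{Y_\bullet}(D)=\sum a_i\Delta_{Y_\bullet}(P_i)+\sum b_j\Delta_{Y_\bullet}(N_j)$. Here $\Delta_{Y_\bullet}(N_j)=\{\nu_\bullet(s_j)\}$ is a single rational point, because $N_j$ is rigid: $h^0(X,\O_X(mN_j))=1$ for every $m\ge 0$ by negativity, so all sections are powers of the one cutting out $N_j$. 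For a pseudo-effective $D$ on the boundary of $\Eff(X)$ the Minkowski identity follows from the big case by continuity, the global Okounkov body being closed. Indecomposability holds because the $\Delta_{Y_\bullet}(P_i)$ are indecomposable by \cite{dp} and a point is trivially indecomposable.

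It then remains to check the polyhedrality hypothesis of Theorem \ref{th}. The bodies $\Delta_{Y_\bullet}(N_j)$ are rational points. For the movable $P_i$ one invokes the description of Okounkov bodies of surfaces with respect to a general flag (\cite{KLM}): $\Delta_{Y_\bullet}(P_i)$ is the region $\{(t,y):0\le t\le\mu,\ \alpha(t)\le y\le\beta(t)\}$ with $\alpha$ convex and $\beta$ concave piecewise linear, and both the breakpoints (where the segment $P_i-tC$ crosses a wall of the Zariski chamber decomposition) and all the slopes are rational, since $\Eff(X)$, $\Nef(X)$ and all the Zariski decompositions along that segment are rational. Hence each $\Delta_{Y_\bullet}(P_i)$ is a rational polygon, Theorem \ref{th} applies, and $\Delta_{Y_\bullet}(X)$ is rational polyhedral, spanned by $\bigcup_i\{(x,[D_i])\with x\text{ a vertex of }\Delta_{Y_\bullet}(D_i)\}$.

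The only genuinely non-formal ingredient is external, namely the existence in \cite{dp} of a finite movable Minkowski base for a general flag on a surface (which itself rests on the finiteness of the Zariski chamber decomposition under our hypothesis); everything else is bookkeeping. Inside the present argument the one point deserving a short verification is the identification of the translation $\phi$ of \cite[Definition 1.3]{psu} with the honest Minkowski summand $\sum b_j\Delta_{Y_\bullet}(N_j)$: this rests on the identity $\Delta_{Y_\bullet}(D)=\Delta_{Y_\bullet}(P)+\nu_\bullet(s_N)$ — multiplication by the section cutting out the negative part being an isomorphism on spaces of sections — together with additivity of the valuation vector, $\nu_\bullet(s_N)=\sum b_j\nu_\bullet(s_j)$. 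I do not expect any real obstacle beyond correctly transporting the statement of \cite{dp} into the present notion of Minkowski base and handling the non-big boundary divisors by the continuity argument above.
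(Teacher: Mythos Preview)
Your proposal is correct and follows exactly the paper's own route: the paper's proof is a one-line appeal to Theorem \ref{th} together with \cite{dp} and the preceding discussion on augmenting the movable Minkowski base by the (finitely many) negative curves, and you have simply unpacked those steps in detail. The additional justifications you supply---finiteness of negative curves via extremality in the rational polyhedral $\Eff(X)$, rational polyhedrality of each $\Delta_{Y_\bullet}(P_i)$ via \cite{KLM}, and the boundary case by continuity---are the natural verifications the paper leaves implicit.
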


\begin{proof}
	This follows from Theorem \ref{th} together with \cite[Theorem]{dp} and the 
	above argumentation.
\end{proof}
 By the same argument and citing \cite[Theorem 3.1]{psu}, we also obtain the following, which was already noted in \cite[Proposition 6.1]{lm}. 
\begin{cor}
	Let $X$ be a toric variety. 
	Then the global Okounkov body $\Delta_{Y_\bullet}(X)$ with respect to a torus-invariant flag is
	rational polyhedral.
\end{cor}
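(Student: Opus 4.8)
The plan is to deduce this from Theorem \ref{th} in exactly the same way Corollary \ref{C: surface} was obtained, with the toric input of \cite[Theorem 3.1]{psu} replacing the surface input of \cite[Theorem]{dp}. First I would recall that \cite[Theorem 3.1]{psu} produces, for a toric variety $X$ and a torus-invariant admissible flag $Y_\bullet$, a finite collection of \emph{movable} torus-invariant divisors whose Okounkov bodies are indecomposable rational polytopes and from which the Okounkov body of every movable divisor is obtained as a Minkowski sum with linearly varying coefficients. This is almost a Minkowski base in the sense of Section \ref{S: Mink}, the only gap being that it only handles movable classes and only describes Okounkov bodies up to the translation $\phi$ of \cite[Definition 1.3]{psu}.

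Next I would close that gap by the augmentation procedure already spelled out before Corollary \ref{C: surface}: adjoin to the collection the finitely many torus-invariant prime divisors, whose Okounkov bodies (with respect to a torus-invariant flag) are single affine points, hence trivially indecomposable rational polytopes. For a pseudo-effective toric divisor $D$ with its decomposition $D = P + N$ into movable part and rigid (effective, torus-invariant) part, one writes $P = \sum a_i P_i$ using the \cite{psu} base and $N = \sum b_j N_j$ as a non-negative combination of torus-invariant primes; the displayed computation then shows $\Delta_{Y_\bullet}(D) = \sum a_i \Delta_{Y_\bullet}(P_i) + \nu_\bullet(s_1^{b_1}\cdots s_m^{b_m})$, and the last summand is exactly the translation vector $\phi$, realized now as a genuine Minkowski sum with the affine-point bodies. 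Thus the augmented collection is a Minkowski base in our sense, and all its members have rational polyhedral Okounkov bodies.

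Finally I would invoke Theorem \ref{th} directly to conclude that $\Delta_{Y_\bullet}(X)$ is rational polyhedral, spanned by the vectors $\bigcup_i \{(x,[D_i]) \mid x \text{ a vertex of } \Delta_{Y_\bullet}(D_i)\}$ over the augmented base. The only point requiring a little care — and the place I would expect a referee to probe — is the verification that the decomposition coefficients for pseudo-effective (not merely movable) classes are genuinely non-negative and that the negative-part supports form a finite set; both are immediate in the toric setting because the effective cone is rational polyhedral with the torus-invariant primes among its generators, so this is routine rather than a real obstacle. In fact the statement is not new: it was already observed in \cite[Proposition 6.1]{lm}, so the purpose here is only to exhibit it as an instance of the Minkowski-base machinery.
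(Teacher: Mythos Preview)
Your proposal is correct and follows exactly the paper's own argument: the paper states that the corollary follows ``by the same argument'' as Corollary \ref{C: surface}, ``citing \cite[Theorem 3.1]{psu}'', and notes that the result was already observed in \cite[Proposition 6.1]{lm}. Your write-up simply makes explicit the augmentation step and the application of Theorem \ref{th} that the paper leaves implicit.
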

Note that Theorem \ref{th} in addition enables us in both cases to recover generators of the global Okounkov body. However, in general the generating set is not minimal. Some of the rays spanned need not be extremal in $\Delta_{Y_\bullet}(X)$. In the following section,  we present a direct proof for the fact that the global Okounkov body is rational polyhedral in the setting of Corollary \ref{C: surface} which explicitly gives a set of generating vectors, which in general is smaller than the set given by Theorem \ref{th}.

%

\section{Generators of the global body of a surface}\label{s:surface}

Let $X$ be a smooth projective surface admitting a rational polyhedral 
pseudo-effective cone $\overline{\mbox{Eff}}(X)$. We consider the chamber decomposition introduced in \cite{bks}. It follows from the assumptions that there are finitely many BKS-chambers, whose closures are all rational polyhedral. Let $\{D_1,\ldots, D_r\}$ the union of the generators of all these closures of BKS chambers, and let $D_i=P_i+N_i$ be 
the Zariski decomposition of $D_i$.

Let us make explicit what we mean by a general flag on $X$. Let 
$\{E_j\}_{j=1}^\infty$ be an enumeration of the 
integral divisors in $\overline{\mbox{Eff}}(X)$ admitting a 
Zariski decomposition $E_j=P_j+N_j$ where both $P_j$ and $N_j$ 
are integral. Let $s_j \in H^0(X, \mathcal{O}_X(N_j))$ be the 
defining section for $N_j$. 

We now define an admissible flag on $X$ as follows. Let $A$ be a big and semi-ample 
divisor, and let $s_A \in H^0(X, \mathcal{O}_X(A))$ 
be a section such that the zero set $Z(s_A)$ is an irreducible curve which 
neither lies in the union of 
the base loci $B(D_i), \,  i=1,\ldots, r$, nor  in the union of 
the subvarieties $Z(s_j), \, j \in \N$.  Define $Y:=Z(s_A)$, 
and let $p \in Y$ be a regular point which does lie in the union of the $Z(s_j)$ 
and the $B(D_i)$.

Let $v$ be the valuation on $\C(X)^*$ defined by the admissible 
flag $$X \supset Y\supset\{p\} $$ and let $\Delta_{Y_\bullet}(X)$ be 
the associated global Okounkov body.

\begin{lemma} \label{L: genincone}
For each generator $D_i$, with Zariski decomposition $D_i=P_i+N_i$, 
the inclusion 
\begin{align*}
\{0\}\times [0, D_i \cdot A] \times  \{[D_i]\} \subseteq \Delta_{Y_\bullet}(X)
\end{align*}
holds.
\end{lemma}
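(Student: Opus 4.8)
The statement says that over the divisor class $[D_i]$, the Okounkov body $\Delta_{Y_\bullet}(D_i)$ contains the entire segment $\{0\}\times[0,D_i\cdot A]$. I want to produce enough sections with prescribed valuations to fill out this segment. The key is that $D_i$ has Zariski decomposition $D_i = P_i + N_i$, with $P_i$ nef, and the flag $Y_\bullet = (X\supset Y\supset\{p\})$ was chosen so that $Y = Z(s_A)$ avoids the base loci $B(D_i)$ and the negative-part loci $Z(s_j)$, while $p$ is a regular point of $Y$ that does lie in these loci. The first coordinate of the Okounkov valuation is $\ord_Y$, and the second is $\ord_p$ of the restriction to $Y$.

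\emph{Step 1: the point $x=0$, i.e.\ $(0,0)\in\Delta_{Y_\bullet}(D_i)$.} Take a large multiple $m$ such that $mD_i$ is effective with $H^0(X,\O_X(mD_i))\neq 0$, and pick a section whose divisor is $\leq m$ times a general effective representative avoiding $Y$. More precisely, since $Y\not\subseteq B(D_i)$, for $m\gg 0$ there is a section $s\in H^0(X,\O_X(mD_i))$ with $\ord_Y(s)=0$; restricting $s$ to $Y$ and using that $p$ can be chosen (or is, by the general choice) outside the zero locus of $s|_Y$ — actually here I must be careful, since $p$ \emph{does} lie in $B(D_i)$. So instead: choose $s$ general in the linear system, so that $s|_Y$ does not vanish at the general point, hence for the specific $p$ we get $\ord_p(s|_Y)$ finite; dividing by $m$ and taking $m\to\infty$ along sections that vanish less and less at $p$ shows $(0,0)$ is a limit point, hence in the (closed) body. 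The cleaner route: $Y\not\subseteq B(D_i)$ gives a section $s_0$ with $\ord_Y(s_0)=0$; among all such sections the minimal value of $\ord_p(s_0|_Y)/m$ tends to $0$ as $m\to\infty$ because $Y$ meets the "mobile" part of the system, so $(0,0)\in\Delta_{Y_\bullet}(D_i)$.

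\emph{Step 2: the point $x=D_i\cdot A$.} Here I use that $Y\in|A|$ and the restriction $P_i|_Y$ has degree $P_i\cdot A = P_i\cdot Y$ on the curve $Y$; more to the point $D_i\cdot A = (P_i+N_i)\cdot A$, but $N_i\cdot A$ need not vanish... Let me reconsider. The relevant computation: for a section $s$ of $mD_i$ with $\ord_Y(s)=0$, the restriction $s|_Y$ is a section of $\O_X(mD_i)|_Y$, which has degree $m(D_i\cdot A)$ on $Y$, so $\ord_p(s|_Y)\leq m(D_i\cdot A)$, giving the \emph{upper} bound $x\leq D_i\cdot A$ for the first coordinate — this shows the segment is as long as it can be. For the lower endpoint $x=D_i\cdot A$ I want a section whose restriction to $Y$ vanishes to the full order $m(D_i\cdot A)$ at $p$, i.e.\ $s|_Y$ vanishes only at $p$. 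Because $A$ is semi-ample and $\O_X(mD_i)|_Y$ has positive degree for $m\gg0$, for suitable $m$ the line bundle $\O_X(mD_i - m(D_i\cdot A)p')$ on $Y$ (meaning twisting down by a divisor supported at $p$) still has a section; lifting such a section of $mD_i|_Y$ to a section of $mD_i$ on $X$ (possible since $H^1(X,\O_X(mD_i-Y)) = H^1(X,\O_X(mD_i-A))$ vanishes for $m\gg0$ by Serre vanishing, using $A$ ample-ish — need $D_i$ big so $mD_i-A$ is big and nef up to... hmm, $D_i$ is only pseudoeffective, but $P_i$ is nef and big provided $D_i$ is big, and the generators $D_i$ of BKS chambers can be taken big) gives $(D_i\cdot A, 0)\in\Delta_{Y_\bullet}(D_i)$.

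\emph{Step 3: convexity.} Having $(0,0)$ and $(D_i\cdot A,0)$ both in the Okounkov body, convexity of $\Delta_{Y_\bullet}(D_i)$ gives the whole segment $\{0\}\times[0,D_i\cdot A]$, and multiplying by $\{[D_i]\}$ places it inside $\Delta_{Y_\bullet}(X)$ by the defining fiber property.

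\emph{Main obstacle.} The subtle point is Step 2, the surjectivity of the restriction map $H^0(X,\O_X(mD_i))\to H^0(Y,\O_X(mD_i)|_Y)$ onto the relevant sub-line-system, and more specifically producing a section of $mD_i|_Y$ vanishing to maximal order at $p$. This requires a vanishing statement $H^1(X,\O_X(mD_i-A))=0$ for $m\gg 0$; since $D_i$ is big (being a generator of a full-dimensional BKS chamber) and $A$ is fixed, $mD_i - A$ is big for large $m$, but bigness alone does not give $H^1$-vanishing — one needs e.g.\ that $mD_i-A$ is nef and big, or an application of Fujita-type vanishing. Using the Zariski decomposition, $mD_i - A = mP_i + mN_i - A$; the nef-and-big part $mP_i - A$ eventually dominates, and one should be able to push the argument through by absorbing $N_i$ into the error term or by working on a resolution where $P_i$ becomes semiample. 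I expect this cohomological lifting — rather than the convexity or the endpoint bookkeeping — to be the technical heart of the proof.
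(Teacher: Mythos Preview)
Your approach has a genuine gap in Step~2 and misses the paper's key simplification. First, you were misled by a typo in the flag setup: the point $p$ is to be chosen \emph{not} to lie in the $Z(s_j)$ and the $B(D_i)$ (the paper's own proof uses precisely this). With that correction, the defining section of $N_i$ has valuation $(0,0)$, whence $\Delta_{Y_\bullet}(D_i)=\Delta_{Y_\bullet}(P_i)$, and your Step~1 becomes immediate since $p\notin B(P_i)$. More importantly, Step~2 then reduces to showing $(0,P_i\cdot A)\in\Delta_{Y_\bullet}(P_i)$ for the \emph{nef} divisor $P_i$; the paper dispatches this by citing an external lemma for nef divisors rather than constructing sections by hand. (The lemma statement itself appears to carry a typo: the proof and the subsequent theorem both use $P_i\cdot A$, not $D_i\cdot A$; these differ whenever $N_i\cdot A>0$.)

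Your proposed route for Step~2 --- lifting sections from $Y$ via $H^1(X,\O_X(mD_i-A))=0$ --- cannot work as written. The generators $D_i$ are extremal rays of closures of BKS chambers and include classes on the boundary of $\Eff(X)$, so $D_i$ is in general not big (contrary to your parenthetical claim), and then $mD_i-A$ is never effective, let alone subject to a vanishing theorem. Even when $D_i$ is big, bigness alone gives no $H^1$-vanishing, as you yourself note. The passage to the nef positive part $P_i$ is exactly what makes the endpoint argument go through; working directly with $D_i$ is the wrong decomposition here.
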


\begin{proof}
By the choice of the point $p$, the defining section of $mN_i$, for $m$ big enough, 
has value zero. Hence, $\Delta_{Y_\bullet}(D_i)=\Delta_{Y_\bullet}(P_i)$.
Moreover, since $p$ does not lie in the base locus of $P_i$, we have 
$(0,0) \in \Delta_{Y_\bullet}(P_i)$.
Finally, since $P_i$ is nef, we also have $(0,P_i \cdot A) \in \Delta_{Y_\bullet}(P_i)$ 
(cf. \cite[Lemma 4.1]{sep}). 
Thus, $((0,0), [D_i]) \in \Delta_{Y_\bullet}(X)$ and $(0,(P_i \cdot A),[D_i]) \in \Delta_{Y_\bullet}(X)$.
The claim now follows from the convexity of  $\Delta_{Y_\bullet}(X)$.
\end{proof}

\begin{theorem}
The cone $\Delta_{Y_\bullet}(X)$ is the closed convex cone 
generated by the vectors 
$((0,0), [D_i]), ((0,P_i \cdot A),[ D_i]), ((1,0), [A]), \quad i=1,\ldots, r$. 
\end{theorem}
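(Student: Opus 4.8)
The plan is to show that the candidate cone $\mathcal{K}$ generated by the listed vectors equals $\Delta_{Y_\bullet}(X)$ by proving both inclusions. The inclusion $\mathcal{K}\subseteq\Delta_{Y_\bullet}(X)$ is essentially already done: Lemma \ref{L: genincone} gives $((0,0),[D_i])$ and $((0,P_i\cdot A),[D_i])$ in the global body for each $i$, and $((1,0),[A])\in\Delta_{Y_\bullet}(X)$ because $Y=Z(s_A)$ is a component of a divisor in $|A|$, so the section $s_A$ has valuation vector $(1,0)$ (it vanishes to order exactly $1$ along $Y$ and the residual curve meets $Y$ away from $p$, or more simply $A-Y$ is effective and misses $p$); hence $(1,0)\in\Delta_{Y_\bullet}(A)$. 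Since $\Delta_{Y_\bullet}(X)$ is a closed convex cone it contains $\mathcal{K}$.

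\textbf{The reverse inclusion.} For $\Delta_{Y_\bullet}(X)\subseteq\mathcal{K}$ I would argue fiberwise over big classes, using that both sides are closed cones so it suffices to check over a dense set of big $D$. Fix a big divisor $D$ with Zariski decomposition $D=P+N$. The Okounkov body of a surface with respect to this flag is computed by the Lazarsfeld--Musta\c{t}\u{a}/\cite{KLM} formula: after translating by $\nu_\bullet(N)=(\ord_Y N,\,\ord_p(N|_Y))$, the body $\Delta_{Y_\bullet}(D)$ is the region $\{(t,y): 0\le t\le \mu,\ \alpha(t)\le y\le\beta(t)\}$ where $\mu$ is the largest $t$ with $P-tY$ pseudo-effective and $\alpha,\beta$ are piecewise-linear functions governed by the Zariski chambers of $P-tY$. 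By the choice of the flag (general $A$, and $p$ avoiding all the relevant base loci and the $Z(s_j)$), the translation term is zero for the generators $D_i$, and more generally the functions $\alpha$ and $\beta$ are controlled by intersection numbers $P_i\cdot A$ as $D$ ranges over a BKS-chamber. Concretely, on a fixed BKS-chamber the positive part $P$ varies linearly in $D$, so $\mu$ and the intersection numbers defining $\beta(0)$ vary linearly; writing $D=\sum a_i D_i$ as a nonnegative combination of the chamber's extremal generators, one gets $\Delta_{Y_\bullet}(D)=\sum a_i\Delta_{Y_\bullet}(D_i)$ by the same Minkowski-additivity reasoning as in Theorem \ref{th} (each $\Delta_{Y_\bullet}(D_i)$ being the triangle with vertices $(0,0)$, $(0,P_i\cdot A)$, $(1,0)$-scaled, i.e. $\conv\{(0,0),(0,P_i\cdot A),(\mu_i,0)\}$). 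Then the fiber of $\mathcal{K}$ over $D$, which is $\sum a_i\conv\{(0,0),(0,P_i\cdot A)\}+(\text{multiple of }(1,0))$, matches $\Delta_{Y_\bullet}(D)$. Summing over the finitely many BKS-chambers, whose closures cover $\Eff(X)$, and invoking \cite[Lemma A.1]{oka} to patch the chamberwise-polyhedral descriptions, gives $\Delta_{Y_\bullet}(X)\subseteq\mathcal{K}$.

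\textbf{Main obstacle.} The delicate point is verifying that on each BKS-chamber the Okounkov body really is the triangle $\conv\{(0,0),(0,P\cdot A),(\mu,0)\}$ with $\mu=\mu(D)$ linear in $D$ — i.e. that the upper boundary function $\beta(t)$ is the single linear segment from $(0,P\cdot A)$ down to $(\mu,0)$ with no intermediate breakpoints, and the lower boundary $\alpha(t)\equiv 0$. This requires that for $0<t<\mu$ the divisor $P-tY$ stays nef (so that no further Zariski-chamber walls are crossed as we translate in the $Y$-direction), which should follow from $Y=Z(s_A)$ being chosen so that $A$ is semi-ample and the base loci of the $P_i$ avoid $Y$; and that $\ord_p$ of the relevant restricted section vanishes, which is exactly the genericity of $p$ against the $B(D_i)$ and $Z(s_j)$. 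I would also need to double-check the edge cases where $D$ lies on a wall between BKS-chambers or where $P$ has zero volume; closedness of both cones handles these by a limiting argument. Once the triangle description is nailed down, the Minkowski-additivity and the patching via \cite[Lemma A.1]{oka} are routine, paralleling the proof of Theorem \ref{th}.
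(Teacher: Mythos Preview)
Your approach has a genuine gap at exactly the point you flag as the ``main obstacle'': the claim that on each BKS-chamber the Okounkov body is the triangle $\conv\{(0,0),(0,P\cdot A),(\mu,0)\}$, and consequently that $\Delta_{Y_\bullet}(D)=\sum a_i\Delta_{Y_\bullet}(D_i)$ for the BKS-generators, is false. Minkowski additivity of Okounkov bodies does \emph{not} hold on BKS-chambers; this is precisely why the paper introduces the finer Minkowski-chamber decomposition in Section~\ref{S: Mink}. Concretely, the example worked out at the end of Section~\ref{s:surface} (with $A=H$) already refutes the triangle claim: $3H-E_1-E_2$ lies in the nef BKS-chamber, yet its Okounkov body is the quadrilateral $\conv\{(0,0),(0,3),(1,2),(2,0)\}$. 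If you try the Minkowski sum $\Delta(H)+\Delta(H-E_1)+\Delta(H-E_2)$ you get $\conv\{(0,0),(0,3),(1,2),(1,0)\}$, which misses the vertex $(2,0)$. The underlying reason is that as $t$ grows, $P-tA$ inevitably crosses BKS-walls (here $3H-E_1-E_2-tH$ leaves the nef cone at $t=1$ but stays pseudo-effective until $t=2$), so the upper boundary $\beta(t)$ acquires breakpoints and the body is not a single triangle. No genericity condition on $Y$ or $p$ can prevent this wall-crossing, since it happens in $N^1(X)_\R$, not on $X$.

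The paper's proof sidesteps the need to describe $\Delta_{Y_\bullet}(D)$ globally by working pointwise: given any valuation vector $(a,b)$ arising from a section $s$ of $D$, one first \emph{strips off} the $Y$-direction by passing to $\zeta=s/s_A^a$, a section of $E=D-aA$ with $v_1(\zeta)=0$. Now one only has to place $((0,b),[E])$ in the cone $\mathcal K$, and for this the sole information needed is that $b\le P_E\cdot A$, which follows because (after raising to a power so that $mP_E,mN_E$ are integral) $\zeta^m$ factors through a section of $mP_E$ and the negative-part factor contributes nothing to the valuation by the choice of $p$. Linearity of the \emph{positive part} on the closure of a BKS-chamber then gives $P_E\cdot A=\sum t_i\,P_i\cdot A$, which is enough to write $((0,b),[E])$ as a convex combination of $((0,0),[D_i])$ and $((0,P_i\cdot A),[D_i])$. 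Adding back $a\cdot((1,0),[A])$ finishes. The point is that one never needs Minkowski additivity of the full bodies---only linearity of the intersection number $P_E\cdot A$ on BKS-chambers, which is a much weaker (and true) statement.
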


\begin{proof}
Let $D$ be a big divisor, and let $s \in H^0(X, \mathcal{O}_X(D))$ be 
a nonzero section with $v(s)=(a, b)$. Then $\zeta:=s/s_A^a \in H^0(X, \mathcal{O}_X(E))$, 
where  $E:=D-aA$. The numerical equivalence class of the divisor $E$ lies in the closure of a BKS-chamber $\Sigma_P$, for some big and nef divisor $P$.  Let 
$E=P_E+N_E$ be the Zariski decomposition of $E$. 
The (numerical equivalence class of the) divisor $E$ can then be written as a linear combination 
\begin{align*}
[E]=t_1[D_1]+\cdots +t_r[D_r], \quad t_1,\ldots, t_r \geq 0, 
\end{align*}
and where $t_i \neq 0$ only for those $[D_i]$ lying on the boundary of $\Sigma_P$. 
By the linearity of Zariski decompositions on closures of BKS-chambers 
(\cite[Proposition 2.3]{dp}), 
the positive part $P_E$ then decomposes as 
\begin{align*}
[P_E]=t_1[P_1]+\cdots +t_r[P_r]. 
\end{align*}

Now, let $m \in \N$ be so big that $mP_E$ and $mN_E$ are both integral 
divisors. The section $\zeta^m \in H^0(X, \mathcal{O}_X(mE))$ then 
factorizes uniquely as $$\zeta^m=\eta \sigma,$$
with $\eta \in H^0(X, \mathcal{O}_X(mP_E))$, and $\sigma \in H^0(X, \mathcal{O}_X(mN_E))$.
Moreover, $(0,mb)=v(\zeta^m)=v(\eta)$, 
by the choice of the point $p \in Y$,
since $\sigma$ is one of the $s_j$.
Hence $mb \in [0,mP_E \cdot A]$, so that 
\begin{align*}
mb=cm \sum_{i=1}^r t_i P_i \cdot A,
\end{align*}
for some $c \in [0,1]$. Thus,
\begin{align*}
((0,mb), m[E])=cm\sum_{i=1}^rt_i((0,P_i \cdot A), [D_i])+m(1-c)\sum_{i=1}^rt_i((0,0), [D_i]),
\end{align*}
from which it follows that $((a, b), [D])$ lies in the closed convex cone 
generated by the vectors 
$((0,0), [D_i]), ((0,P_i \cdot A), [D_i]), ((1,0), [A]), \quad i=1,\ldots, r$.

Hence, the global Okounkov body $\Delta_{Y_\bullet}(X)$ is 
contained in the closed convex cone generated by these vectors. 
In view of  Lemma \ref{L: genincone}, equality thus holds.
\end{proof}

\begin{remark}
If $X$ is a homogeneous surface, i.e., carrying a transitive action of a connected 
algebraic group, every effective divisor is nef. In the case when the pseudo-effective 
cone is rational polyhedral, there is then only one BKS-chamber. 
The proof of the above theorem in this case then yields the proof of 
\cite[Theorem 4.3]{sep}. 
\end{remark}


\begin{example}
Consider the situation of Example \ref{ex: Bl2}. Note that $\Eff(X)$ is finitely generated and we can apply the theorem once we fix a permitted flag. The divisor $H$ is big and semi-ample, in particular, its general member $C$ is an irreducible curve not contained in any negative part of big divisors on $X$. Fix a general point $x$ on $C$ to obtain a flag $(C,x)$.

 Note that $\Eff(X)$ decomposes into five BKS-chambers: the nef chamber and the four chambers corresponding to the big and nef divisors $H, 2H-E_1, 2H-E_2, 2H-E_1-E_2$. The classes $\xi_i$ in the theorem are given in the following table.

\begin{table}[ht]
      \centering
      \newlength\extraarrayskip \extraarrayskip=2ex
      \begin{tabular}{cc}\tline
      Chamber & $\qquad$Generators$\qquad$    \\\tline
            $\Sigma_A$ & $H,\ H-E_1,\ H-E_2$ \\[\extraarrayskip]
            $\Sigma_{H}$ & $H,\ E_1,\ E_2$ \\[\extraarrayskip]
            $\Sigma_{2H-E_1}$ &$H,\ H-E_1,\ E_2$ \\[\extraarrayskip]
            $\Sigma_{2H-E_2}$ & $H,\ H-E_2,\ E_1$ \\[\extraarrayskip]
            $\Sigma_{2H-E_1-E_2}$ & $ H-E_1,\ H-E_2,\ H-E_1-E_2$ \\[\extraarrayskip]
            
            \tline
      \end{tabular}
      \caption{\label{fig:generators}%
         Generators of the BKS-chambers}
   \end{table}

The global Okounkov body of $X$ with respect to the flag $(C,x)$ consequently is generated by the following vectors:
\be
	 ((0,0),[H]),&&\quad ((0,1),[H]),\\
	((0,0),[H-E_1]),&&\quad ((0,1),[H-E_1]),\\
	((0,0),[H-E_2]),&&\quad ((0,1),[H-E_2]),\\
	((0,0),[E_1]),&&\quad ((0,0),[E_2]),\\
	((0,0),[H-E_1-E_2]),&&\quad((1,0),[H]).\\
\ee
To illustrate this result, let us check how the fiber $pr_2^{-1}(D)\cap\Delta_{Y_\bullet}(X)$ over a given big divisor $D$, which should be just the Okounkov body $\Delta_{Y_\bullet}(D)\times\set{D}$, arises as positive linear combination of the generators. Pick $D$ to be the ample divisor $3H-E_1-E_2$. The Okounkov body is the convex hull of the points $(0,0), (0,3), (1,2), (2,0)$. Now the corresponding vectors   in $\Delta_{Y_\bullet}(X)$ arise as follows:
\be
	((0,0),[D]) &=&  ((0,0), [H]) + ((0,0), [H-E_1]) +  ((0,0), [H-E_2])\\
	((0,3),[D]) &=&  ((0,1), [H]) +  ((0,1), [H-E_1]) +  ((0,1), [H-E_2])\\
	((1,2),[D]) &=&  ((1,0), [H]) + ((0,1), [H-E_1]) + ((0,1), [H-E_2])\\
	((2,0),[D]) &=&  2\cdot ((1,0),[H]) + ((0,0), [H-E_1-E_2]).
\ee

\end{example}

\section{Homogeneous 3-folds}

Let $X$ be a 3-dimensional homogeneous projective variety, i.e., $X$ carries a 
transitive action of a complex connected algebraic group $G$,  and let $Y_1 \subseteq X$ be 
an smooth irreducible very ample divisor on $X$ with defining section 
$s_{Y_1} \in H^0(X, \mathcal{O}(Y_1))$. Assume further that both $X$ and the 
divisor $Y_1$ have rational polyhedral pseudo-effective cones.

Since $X$ is homogeneous, every effective divisor on $X$ is 
nef (cf. \cite[Example 1.4.7.]{laz}). Let therefore $D_1, \ldots, D_r$ be integral 
nef divisors generating the pseudo-effective cone $\overline{\mbox{Eff}}(X)$.
Let $Q \subseteq \overline{\mbox{Eff}}(Y_1)$ be the closed convex subcone 
generated by the restrictions to $Y_1$ of the divisors $D_1,\ldots, D_r$.
Let 
\begin{align}
Y_3 \subseteq Y_2 \subseteq Y_1\label{E: surfaceflag}
\end{align}
 be an admissible flag on $Y_1$, as in section \ref{s:surface}, and let $Y^1_\bullet$ 
denote this flag. Since the global Okounkov body $\Delta_{Y^1_\bullet}(Y_1)$ of $Y_1$ 
with respect to $Y^1_\bullet$ is rational polyhedral, by assumption, the closed convex 
subcone $p_2^{-1}(Q) \cap \Delta_{Y^1_\bullet}(Y_1)$, where $p_2$ denotes the projection 
onto the second factor, is also rational polyhedral.

Let $v: \C(X)^* \rightarrow \Z^3$ be the valuation defined by the flag $Y_\bullet$,
$Y_3 \subseteq Y_2 \subseteq Y_1 \subseteq Y_0:=X$, and let $v^1: \C(Y)^* \rightarrow \Z^2$ be 
the valuation defined by the truncated flag \eqref{E: surfaceflag}.  We now define the 
semigroups
\begin{align*}
S&:=\{(v(s), [D]) \in \N_0^3 \times N^1(X)_\Z \mid s \in H^0(X, \mathcal{O}(D)), v_1(s)=0\},\\
S_1&:=\{(v^1(s), [D \cdot Y_1]) \in \N_0^2 \times N^1(Y_1)_\Z \mid 
s \in H^0(Y_1, \mathcal{O}(D \cdot Y_1))\},
\end{align*}
as well as the morphism
\begin{align*}
q: S \rightarrow S_1, \quad q(v(s), [D]):=(v_2(s), v_3(s), [D  \cdot Y_1])
\end{align*}
of semigroups. Then $q$ extends uniquely to a linear map 
$\R^2 \oplus N^1(X)_\R \rightarrow \R^2 \oplus N^1(Y_1)_\R$, 
which we will also denote by $q$. Here we have embedded $\R^2$ into $\R^3$ by the linear 
map $(x, y) \mapsto (0,x,y)$. If $C(S) \subseteq \R^3 \oplus N^1(X)_\R$ and 
$C(S_1) \subseteq \R^3 \oplus N^1(Y_1)_\R$ are the closed convex cones generated by the 
semigroups $S$ and $S_1$, respectively, the inclusion 
\begin{align*}
\Delta_{Y^1_\bullet}(\Delta \cdot Y_1) \times \{0\} \subseteq \Delta_{Y_\bullet}(D), 
\quad [D] \in \overline{\mbox{Eff}}(X),  
\end{align*}
which follows by the same proof as \cite[Lemma 4.1]{sep},
implies the equality
\begin{align*}
C(S)&=q^{-1}(C(S_1)) \cap \left((\R_{\geq 0})^2 \times \overline{\mbox{Eff}}(X)\right)\\
&=q^{-1}(p_2^{-1}(Q) \cap \Delta_{Y^1_\bullet}(Y_1)) \cap 
\left((\R_{\geq 0})^2 \times \overline{\mbox{Eff}}(X) \right).
\end{align*}
In particular, $C(S)$ is a rational polyhedral cone. Let $w_1,\ldots, w_k \subseteq C(S)$ be 
integral generators of $C(S)$.

\begin{theorem} \label{T: 3homokb}
Let $X$ be a homogeneous 3-fold admitting an admissible flag $Y_\bullet$ 
as above. Then the global Okounkov body $\Delta_{Y_\bullet}(X)$ 
of $X$ with respect to the flag $Y_\bullet$
is generated by the vectors $(v(s_{Y_1}), [Y_1]), w_1,\ldots, w_k$.
\end{theorem}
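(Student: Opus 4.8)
The plan is to show that the global Okounkov body $\Delta_{Y_\bullet}(X)$ coincides with the closed convex cone $C(S)'$ generated by $(v(s_{Y_1}),[Y_1])$ together with the chosen integral generators $w_1,\ldots,w_k$ of $C(S)$. One inclusion is essentially immediate: the vectors $w_1,\ldots,w_k$ lie in $C(S)\subseteq\Delta_{Y_\bullet}(X)$ since every element $(v(s),[D])$ of the semigroup $S$ is a valuation vector of a genuine section, and $(v(s_{Y_1}),[Y_1])$ is likewise a valuation vector (of the defining section of the flag divisor $Y_1$). As $\Delta_{Y_\bullet}(X)$ is a closed convex cone containing all these vectors, it contains their convex hull, so $C(S)'\subseteq\Delta_{Y_\bullet}(X)$.

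For the reverse inclusion, I would take an arbitrary big divisor $D$ and a nonzero section $s\in H^0(X,\mathcal O(D))$ with $v(s)=(a_1,a_2,a_3)$, and argue that $((a_1,a_2,a_3),[D])$ lies in $C(S)'$. The idea mirrors the surface proof in Section \ref{s:surface}: peel off the contribution along $Y_1$. Writing $a:=a_1=v_1(s)$ for the order of vanishing of $s$ along $Y_1$, the section $\zeta:=s/s_{Y_1}^{a}$ lies in $H^0(X,\mathcal O(D-aY_1))$ and has $v_1(\zeta)=0$, so $(v(\zeta),[D-aY_1])\in S\subseteq C(S)$. Since $v(s)=v(\zeta)+a\cdot v(s_{Y_1})$ (the valuation is additive and $v(s_{Y_1})=(1,v_2(s_{Y_1}),v_3(s_{Y_1}))$ by the choice of flag), we get
\begin{align*}
((a_1,a_2,a_3),[D])=(v(\zeta),[D-aY_1])+a\cdot(v(s_{Y_1}),[Y_1]),
\end{align*}
which exhibits $((a_1,a_2,a_3),[D])$ as a nonnegative combination of an element of $C(S)$ and the vector $(v(s_{Y_1}),[Y_1])$, hence as an element of $C(S)'$. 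Taking the closure of the set of all such valuation vectors over all big $D$ gives $\Delta_{Y_\bullet}(X)\subseteq C(S)'$, and combined with the first inclusion this yields equality. One should also note that big divisors are dense and the cone structure lets us pass freely between $N^1(X)_\Z$ and $N^1(X)_\R$.

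The main obstacle, and the point requiring the most care, is justifying that $C(S)$ is rational polyhedral with the explicit description
\begin{align*}
C(S)=q^{-1}\big(p_2^{-1}(Q)\cap\Delta_{Y^1_\bullet}(Y_1)\big)\cap\big((\R_{\ge 0})^2\times\overline{\mathrm{Eff}}(X)\big),
\end{align*}
since the finiteness of $w_1,\ldots,w_k$ rests on this. The containment $C(S)\subseteq$ (right-hand side) follows from the fact that restricting a section along $Y_1$ respects valuations (the truncated valuation $v^1$ reads off the last two coordinates), that the restricted class lands in $Q$ by construction of $Q$ from the generators $D_i$, and that $S$ has image in $(\N_0)^2\times\overline{\mathrm{Eff}}(X)$. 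The reverse containment is where the restriction-surjectivity and the cited analogue of \cite[Lemma 4.1]{sep} — namely $\Delta_{Y^1_\bullet}(D\cdot Y_1)\times\{0\}\subseteq\Delta_{Y_\bullet}(D)$ — are used: a valuation vector on $Y_1$ with second-factor class in $Q$ lifts, possibly after scaling, to a valuation vector on $X$ with vanishing first coordinate, and adding multiples of the $Y_1$-direction fills in the remaining first coordinate. Finally, $q^{-1}$ of a rational polyhedral cone, intersected with a rational polyhedral cone, is rational polyhedral, since $q$ is a rational linear map; that $p_2^{-1}(Q)\cap\Delta_{Y^1_\bullet}(Y_1)$ is rational polyhedral was already observed above using the assumed rational polyhedrality of $\Delta_{Y^1_\bullet}(Y_1)$ and of $\overline{\mathrm{Eff}}(Y_1)$. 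Assembling these, the theorem follows.
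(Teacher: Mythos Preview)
Your proposal is correct and follows essentially the same approach as the paper: both prove the easy inclusion by noting all listed vectors are valuation vectors of sections, and for the reverse inclusion both peel off the order of vanishing along $Y_1$ by dividing by $s_{Y_1}^a$ to land in the semigroup $S$, then decompose via the generators $w_i$. Your write-up is in fact slightly more careful than the paper's (you keep track of the factor $a$ explicitly and spell out that the rational polyhedrality of $C(S)$---established in the paper before the theorem statement---is what makes the generating set finite).
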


\begin{proof}
Clearly, all these vectors belong to $\Delta_{Y_\bullet}(X)$. On the other hand, 
let $E$ be an effective integral divisor on $X$, and let $s \in H^0(X, \mathcal{O}(E))$ 
be a nonzero section. If $s$ vanishes to order $a$ along $Y_1$, the 
section $\zeta:=s/s_{Y_1}^a \in H^0(X, \mathcal{O}(E-aY_1))$ vanishes to 
order $0$ along $Y_1$, i.e., $v_1(\zeta)=0$. Hence, $(v(\zeta), [E-aY_1]) \in S$, 
so that there exist $t_1,\ldots, t_k \geq 0$, such that $(v(\zeta), [E-aY_1])$ can be 
written as the linear combination
$(v(\zeta), [E-aY_1])=t_1w_+\cdots+t_kv_k$. It follows that
\begin{align*}
(v(s), [E])=(v(s_{Y_1}), [Y_1])+t_1w_1+\cdots+t_kw_k.
\end{align*}
This shows that the vectors  $(v(s_{Y_1}), [Y_1]), w_1,\ldots, w_k$ generate the 
cone  $\Delta_{Y_\bullet}(X)$.
\end{proof}

\begin{cor}
Let $G$ be a complex reductive group, and let $P \subseteq G$ be a parabolic 
subgroup, such that the flag variety $X=G/P$ is 3-dimensional. 
Then $X$ admits a flag $Y_\bullet$ for which the global Okounkov 
body $\Delta_{Y_\bullet}(X)$ is rational polyhedral.
\end{cor}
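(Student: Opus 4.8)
The plan is to reduce the corollary to Theorem~\ref{T: 3homokb}: all I need is to produce, on $X=G/P$, a smooth irreducible very ample divisor $Y_1$ whose pseudo-effective cone is rational polyhedral, since then the theorem directly exhibits a finite generating set for $\Delta_{Y_\bullet}(X)$. First I would dispose of the trivial points. Replacing $G$ by its semisimple part does not change $G/P$, so $X$ is a rational homogeneous space, hence Fano; moreover $\Eff(X)$ is generated by the finitely many Schubert divisors (every effective divisor is linearly equivalent to a $B$-stable one, which is a non-negative combination of Schubert divisors), and these are nef because on a homogeneous space every effective divisor is nef (cf. \cite[Example 1.4.7]{laz}). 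This supplies the nef generators $D_1,\dots,D_r$ required in Theorem~\ref{T: 3homokb}, and shows $\Eff(X)$ is rational polyhedral.

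The heart of the matter is to find a very ample divisor $A$ on $X$ with $-K_X-A$ ample. Granting this, a general $Y_1\in|A|$ is smooth and irreducible by Bertini (as $\dim X=3$), and by adjunction $-K_{Y_1}=(-K_X-A)\restr{Y_1}$ is the restriction of an ample divisor, hence ample, so $Y_1$ is a del Pezzo surface. In particular $\Eff(Y_1)$ is rational polyhedral --- generated by the finitely many $(-1)$-curves on $Y_1$, together with the two rulings when $Y_1\cong\P^1\times\P^1$ --- so $Y_1$ also admits a general flag $Y_3\subseteq Y_2\subseteq Y_1$ in the sense of Section~\ref{s:surface}. To construct $A$, I would take $A=\sum_{i\notin I_P}L_{\omega_i}$ in the fundamental-weight basis of $\Pic(X)$: the map $X\to\prod_{i\notin I_P}\P(V_{\omega_i})$ given by the minimal $G$-equivariant embeddings is a closed immersion (because $\bigcap_i P_i=P$), so composing with Segre shows $A$ is very ample. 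On the other hand $-K_X=\sum_{i\notin I_P}c_i\,\omega_i$ with $c_i=2-\langle 2\rho_P,\alpha_i^\vee\rangle\ge 2$, since $2\rho_P$ is a non-negative combination of the $\alpha_j$ with $j\in I_P$ and $\langle\alpha_j,\alpha_i^\vee\rangle\le 0$ for $i\notin I_P$; hence $-K_X-A=\sum_{i\notin I_P}(c_i-1)\,\omega_i$ has all coefficients $\ge 1$ and is ample. Alternatively one may simply run through the list of three-dimensional rational homogeneous spaces --- $\P^3$, the smooth quadric threefold, $\P^1\times\P^2$, $\mathrm{SL}_3/B$, and $\P^1\times\P^1\times\P^1$ --- and check that $\O(1)$, $\O(1)$, $\O(1,1)$, $\O(1,1)$, and $\O(1,1,1)$ respectively do the job.

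With $X$, $Y_1$, and the flag $Y_\bullet=(Y_3\subseteq Y_2\subseteq Y_1\subseteq X)$ in hand, all hypotheses of Theorem~\ref{T: 3homokb} are satisfied, so $\Delta_{Y_\bullet}(X)$ is generated by the finite set $(v(s_{Y_1}),[Y_1]),w_1,\dots,w_k$, hence is rational polyhedral, which is the assertion of the corollary. The one step that genuinely requires care is the middle one --- the existence of a very ample $A$ with $-K_X-A$ ample, i.e.\ that a sufficiently small hyperplane section of $X$ is del Pezzo --- and I expect this to be the main obstacle; it is settled uniformly by the coefficient bound $c_i\ge 2$ for $-K_{G/P}$, or, if one prefers, case-by-case from the classification.
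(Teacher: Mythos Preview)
Your proof is correct and follows the same strategy as the paper: both reduce to Theorem~\ref{T: 3homokb} by exhibiting a smooth very ample divisor $Y_1$ which, via adjunction, is a del Pezzo surface and hence has rational polyhedral pseudo-effective cone. The paper chooses $Y_1\in|L_{\rho_P}|$ and reads off $\mathcal{O}(-K_{Y_1})=L_{\rho_P}\restr{Y_1}$ directly, whereas you take $A=\sum_{i\notin I_P}\omega_i$ and justify that $-K_X-A$ is ample via the coefficient bound $c_i\ge 2$ (and also offer the case-by-case check); the difference is cosmetic.
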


\begin{proof}
Let $B \subseteq P$ be a Borel subgroup of $G$ with Lie algebra $\mathfrak{b}$, and 
let $\mathfrak{h} \subseteq \mathfrak{b}$ be a Cartan subalgebra together 
with a choice $\Phi^+$ of positive roots which exhibits $\mathfrak{b}$ as the 
direct sum of $\mathfrak{h}$ and the sum of the positive root spaces. 
Let $2\rho_P \in \mathfrak{h}^*$ be the sum of the fundamental weights 
which are not attached to simple roots of the Levi factor $L_P$ of P. 
It is well-known that the canonical line bundle $K_X$ is given by the line 
bundle $L_{-2\rho_P}$ induced from the character $\exp(-\rho_P)$ of $P$, 
where the line bundle $L_{\rho_P}$ is very ample. Hence, $X$ is 
a Fano variety. Since $L_{\rho_P}$ is very ample, we can now choose 
a smooth, and hence irreducible, divisor $Y_1 \subseteq X$ such that 
$\mathcal{O}(Y_1)=L_{\rho_P}$. By the adjunction formula we now have
\begin{align*}
K_{Y_1}=K_X+Y_1,
\end{align*}
so that $\mathcal{O}(K_{Y_1})=L_{-\rho_P}\mid_{Y_1}$, which shows that 
$Y_1$ is also a Fano variety. Hence, the pseudo-effective cone is rational polyhedral. 
Thus, $X$ and the divisor $Y_1$ satisfy the assumptions of Theorem \ref{T: 3homokb}
\end{proof}

\begin{remark}
The 3-dimensional flag varieties $G/P$, where $G$ is a reductive complex 
group, and $P \subseteq G$ a parabolic subgroup, are the following: 
the full flag variety $\C^3$, i.e., the variety of all flags of 
subspaces $V_1 \subseteq V_2 \subseteq \C^3$, where $\mbox{dim} V_1=1$ and 
$\mbox{dim} V_2=2$; the products $\mathbb{P}^1 \times \mathbb{P}^1 \times \mathbb{P}^1$ 
and $\mathbb{P}^1 \times \mathbb{P}^2$; and the Grassmannian of all Lagrangean subspaces of 
$\C^4$ with respect to the standard symplectic form. 
\end{remark}

Finally, as an example of homogeneous 3-folds satisfying 
the above conditions, we consider certain abelian varieties.
 By \cite{b:abelian} the condition that $\Eff(X)$ 
be rational polyhedral is equivalent to $X$ being isogenous 
to a product of non-isogenous abelian varieties of Picard 
number 1. This means $X$ is either simple of Picard number 
one, or $X$ is isogenous to $Y_1 \times E$ for an abelian 
surface $Y_1$ with $\rho(Y_1)=1$ and an elliptic curve $E$, 
or $X$ is isogenous to $E_1\times E_2\times E_3$ for 
non-isogenous elliptic curves $E_i$. 

We prove the following
\begin{cor}
	Let $X$ be a abelian 3-fold such that $\Eff(X)$ is rational 
	polyhedral. Then there exists an admissible flag $Y_\bullet$
	such that the global Okounkov body $\Delta_{Y_\bullet}(X)$
	is rational polyhedral. 
\end{cor}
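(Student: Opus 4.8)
The plan is to derive the corollary from Theorem~\ref{T: 3homokb} by checking, in each of the three cases recalled above, that $X$ carries a smooth irreducible divisor $Y_1$ with rational polyhedral pseudo-effective cone, together with a general flag on $Y_1$ of the kind used in Section~\ref{s:surface}. Since $X$, being an abelian variety, is homogeneous, and $\Eff(X)$ is rational polyhedral by hypothesis, these are the only data still to be supplied; the construction of the semigroups $S$, $S_1$, the morphism $q$, the cone $Q$, and the identity $C(S)=q^{-1}(p_2^{-1}(Q)\cap\Delta_{Y^1_\bullet}(Y_1))\cap((\R_{\geq 0})^2\times\Eff(X))$ will then produce a finite generating set of $\Delta_{Y_\bullet}(X)$ exactly as in the proof of Theorem~\ref{T: 3homokb}.

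If $X$ is isogenous to a product $A\times E$ (respectively to $E_1\times E_2\times E_3$), I would fix an isogeny $\phi$ from the product onto $X$ and set $Y_1:=\phi(A\times\{0\})$ (respectively $Y_1:=\phi(E_1\times E_2\times\{0\})$). Since $\phi$ is finite, $Y_1$ is a smooth irreducible divisor on $X$ which, being an effective divisor on an abelian variety, is nef; moreover $Y_1$ is itself an abelian surface, isogenous to $A$ (respectively to $E_1\times E_2$). In the first case $\rho(Y_1)=\rho(A)=1$, so $\Eff(Y_1)$ is a single rational ray. In the second case $\rho(Y_1)=2$ and $N^1(Y_1)_\R$ is spanned by the two square-zero fiber classes $f_1,f_2$ with $f_1\cdot f_2>0$, whence a short computation (using that on an abelian surface the effective cone equals the nef cone) gives $\Eff(Y_1)=\Nef(Y_1)=\R_{\geq 0}f_1+\R_{\geq 0}f_2$. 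In both cases $\Eff(Y_1)$ is rational polyhedral, so, equipping $Y_1$ with a general flag $Y^1_\bullet$ in the sense of Section~\ref{s:surface} and applying the main theorem of that section (in the homogeneous-surface form of the Remark following it), the global Okounkov body $\Delta_{Y^1_\bullet}(Y_1)$ is rational polyhedral, and Theorem~\ref{T: 3homokb} applies (modulo the point on very ampleness addressed below).

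If $X$ is simple with $\rho(X)=1$ it contains no abelian subsurface, and here I would instead take for $Y_1$ a very general member of $|L|$, where $L=M^{\otimes 3}$ for some ample line bundle $M$; then $Y_1$ is smooth by Bertini, irreducible, and very ample. By the Noether--Lefschetz theorem for ample divisors (applicable since $L$ is sufficiently positive), for very general such $Y_1$ the restriction $N^1(X)_\R\to N^1(Y_1)_\R$ is an isomorphism, so $\rho(Y_1)=\rho(X)=1$ and $\Eff(Y_1)$ is again a single rational ray; Theorem~\ref{T: 3homokb} then applies directly.

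The main obstacle, and the reason for passing to an abelian subsurface in the product cases, is the case $X$ isogenous to $E_1\times E_2\times E_3$: a \emph{general} very ample divisor $Y_1\subseteq X$ is a surface of general type of Picard number three whose pseudo-effective cone is in general the non-polyhedral round positive cone, so one genuinely needs a nef (not ample) $Y_1$. One must therefore verify that the proof of Theorem~\ref{T: 3homokb}, which is stated for a very ample $Y_1$, goes through when $Y_1$ is merely a smooth irreducible nef divisor with $\Eff(Y_1)$ rational polyhedral. The two points to check are that (i) the restriction maps $H^0(X,\mathcal O(mD))\to H^0(Y_1,\mathcal O(mD)|_{Y_1})$ are surjective for $D$ big and $m\gg 0$ — which holds because $\mathcal O_X(mD-Y_1)$ is then ample (on an abelian variety the big and ample cones coincide, both being the interior of the pseudo-effective cone, which equals the nef cone) and higher cohomology of ample line bundles on abelian varieties vanishes — and that (ii) the inclusion $\Delta_{Y^1_\bullet}(D|_{Y_1})\times\{0\}\subseteq\Delta_{Y_\bullet}(D)$ cited from \cite[Lemma 4.1]{sep} uses only that $Y_1$ is a smooth irreducible nef divisor. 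Granting (i) and (ii), the identity $C(S)=q^{-1}(p_2^{-1}(Q)\cap\Delta_{Y^1_\bullet}(Y_1))\cap((\R_{\geq 0})^2\times\Eff(X))$, and hence the rational polyhedrality of $C(S)$ and of $\Delta_{Y_\bullet}(X)$, follow as before.
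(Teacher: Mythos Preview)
Your treatment of the non-simple cases matches the paper's: both take $Y_1$ to be the abelian subsurface arising as the image of a factor under the isogeny, observe that $\Eff(Y_1)$ is rational polyhedral (the paper cites \cite[Lemma~3.1]{b:abelian} where you compute directly), and then invoke Theorem~\ref{T: 3homokb}. You are in fact more careful than the paper here: the paper simply writes ``Constructing a flag as in the theorem thus yields a rational polyhedral global Okounkov body'' without commenting on the fact that the abelian subsurface $Y_1$ is only nef, not very ample as the hypotheses of Theorem~\ref{T: 3homokb} demand. Your verification that the argument survives under this weaker hypothesis (via vanishing of $H^1$ for ample line bundles on abelian varieties, using that big equals ample there) is a genuine addition.

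For the simple case, however, the paper's route is considerably lighter than yours. Since $\rho(X)=1$, one picks an ample generator $H$ of $N^1(X)$ and \emph{any} admissible flag for which $\Delta_{Y_\bullet}(H)$ is rational polyhedral (such flags exist by \cite{akl} or \cite{sep1}); then every big class is a positive multiple of $[H]$, so the global body is simply the cone over $\Delta_{Y_\bullet}(H)\times\{[H]\}$, generated by the vectors $(x,[H])$ for $x$ a vertex of $\Delta_{Y_\bullet}(H)$. This avoids Theorem~\ref{T: 3homokb} entirely. Your Noether--Lefschetz approach works in principle, but the claim that $L=M^{\otimes 3}$ is already ``sufficiently positive'' for Noether--Lefschetz on an arbitrary abelian $3$-fold is not justified by the very-ampleness of $M^{\otimes 3}$ alone; you would need to take a higher power or cite a precise effective bound.
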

\begin{proof}
	If $X$ is simple, then let $H$ be an ample generator of $\N^1(X)$, 
	and there is a flag such that $\Delta(H)$ is rational polyhedral. 
	Then the global Okounkov body $\Delta(X)$ is generated by the vectors
	$\set{(x,[D])\with x \mbox{ a vertex of } \Delta(H)}$.
	
	If $X$ is non-simple, then it is isogenous to a product 
	$E\times M$ of an elliptic curve $E$ and an abelian surface $M$ 
	of Picard number 1 or 2. In either case, $\Eff(M)$ is rational polyhedral.
	
	Note that as in Poincaré's complete reducibility theorem 
	(\cite[Theorem 5.3.7]{LB}) the isogeny 
	$$
		E\times M \to X
	$$
	is just given by addition. The image $Y_1$ of the abelian subvariety
	$\{0\}\times M$ is again an abelian subvariety, isogenous to $M$.
	By \cite[Lemma 3.1]{b:abelian}, $Y_1$ has rational polyhedral effective cone as well.
	Constructing a flag as in the theorem thus yields a rational polyhedral
	global Okounkov body.
	
\end{proof}

  \vskip .2cm
 David Schmitz,
   Fach\-be\-reich Ma\-the\-ma\-tik und In\-for\-ma\-tik,
   Philipps-Uni\-ver\-si\-t\"at Mar\-burg,
   Hans-Meer\-wein-Stra{\ss}e,
   D-35032~Mar\-burg, Germany.
   
   \nopagebreak
\textit{E-mail address:} \texttt{schmitzd@mathematik.uni-marburg.de}
   
     \vskip .2cm
Henrik Sepp\"{a}nen,
Mathematisches Institut,
Georg-August-Universit\"at G\"ottingen,
Bunsenstra\ss e 3-5, 
D-37073 G\"ottingen,
Germany

 \nopagebreak
\textit{E-mail address:} \texttt{hseppaen@uni-math.gwdg.de}

%
%


\end{document}